\newtheorem{theorem}{Theorem}
\newtheorem{corollary}[theorem]{Corollary}
\newtheorem{lemma}[theorem]{Lemma}
\newtheorem{problem}[theorem]{Problem}
\def\adots{\mathinner{\mkern2mu\raise0pt\hbox{.}  
\mkern2mu\raise4pt\hbox{.}\mkern1mu
\raise7pt\vbox{\kern7pt\hbox{.}}\mkern1mu}}
\def\tu{{\rm Tur{\'a}n\,}}
\date{ }
\begin{document}
\title{ Extremal digraphs avoiding distinct walks of length 3 with the same endpoints}
\author{ Zejun Huang\thanks{College of Mathmatics and Statistics, Shenzhen University, Shenzhen,  518060, P.R. China.  (mathzejun@gmail.com) }, ~~Zhenhua Lyu\thanks{School of Science, Shenyang Aerospace University, Shenyang,  110136, P.R. China. (lyuzhh@outlook.com)}  } \maketitle

\begin{abstract}
In this paper, we determine the maximum size of digraphs on $n$ vertices   in which there are no  two distinct walks of length $3$ with the same initial vertex and the same terminal vertex. The  digraphs attaining this maximum size are also characterized. Combining this with previous results, we obtain a full solution to a problem  proposed by X. Zhan in 2007 .
\end{abstract}

{\bf Key words:}
digraph, Tur\'an problem,  walk

{\bf 2010 Mathematics Subject Classifications:} 05C20, 05C35
\section{Introduction and main results}
The Tur\'an-type extremal problem is an important topics in extremal graph theory, which concerns the maximum number of edges in graphs containing no given subgraphs and the extremal graphs achieving this maximum. Most of the previous results on Tur\'an problems concern
undirected graphs and only a few Tur\'an problems on digraphs have been investigated; see \cite{BES,BH,BS,HL,HL2,MRT,AS} and the references therein.

A {\it simple digraph} is a digraph that does not contain multiple arcs but allows loops. A {\it strict digraph} is a  loopless simple digraph.
Digraphs in this paper are simple unless otherwise stated.  We follow the terminology and notations  in \cite{bm}. Directed
walks, directed paths and directed cycles are abbreviate as walks, paths and cycles, respectively. The number of vertices in a digraph is called its
{\it order} and the number of arcs its {\it size}. Given a digraph $D$ and a set of digraphs $F$, $D$ is said to be {\it $F$-free} if $D$ contains no  digraph in $F$ as its subgraph.

	Denote by $\overrightarrow{K}_n$ the complete   digraph on $n$ vertices. A natural \tu problem on digraphs is determining the maximum size of   $\overrightarrow{K}_r$-free digraphs of a given order, which has been solved in \cite{JM}. Brown and Harary \cite{BH}  determined the precise extremal size of digraphs as well as the extremal digraphs that avoids a tournament. They also studied digraphs avoiding a direct sum of two tournaments, or a digraph on at most 4 vertices where any two vertices are joined by at least one arc.  In \cite{HL2}, we determined the  extremal size of digraphs avoiding an orientation of the 4-cycle as well as the extremal digraphs. By adopting dense matrices, Brown, Erd\H os, and Simonovits \cite{BES,BES3} presented asymptotic results on  extremal digraphs avoiding a family of digraphs. Howalla, Dabboucy and Tout \cite{HDT1,HDT2} determined the extremal sizes of  the $\overrightarrow{K}_2$-free digraphs avoiding $k$  paths with the same initial vertex and terminal vertex for $k=2,3$. Maurer, Rabinovitch and Trotter \cite{MRT} studied the extremal  $\overrightarrow{K}_2$-free transitive digraphs which  contain at most one  path from $x$ to $y$  for any two distinct vertices $x,y.$

Denote by
$F_k$ the family of  digraphs consisting of two different walks of length $k$ with the same initial vertex and the same terminal vertex.  Let $ex(n,F_k)$ be  the maximum size of $F_k$-free digraphs of order $n$, and let $EX(n,F_k)$ be the set of $F_k$-free digraphs of order $n$ with size $ex(n,F_k)$. We are interested in the following \tu-type problem.
\begin{problem}\label{p1}
Given positive integers $n$ and $k$, determine $ex(n,F_k)$ and  $EX(n,F_k)$.
\end{problem}

Let $D=(V, {A})$ be a digraph with vertex set $ {V}=\{v_1,v_2,\ldots,v_n\}$ and arc set $ {A}$. Its {\it adjacency matrix} $A_D=(a_{ij})$ is defined by
\begin{equation}\label{eqhh1}
a_{ij}=\left\{\begin{array}{ll}
1,&\textrm{if } (v_i,v_j)\in  {A};\\
0,&\textrm{otherwise}.\end{array}\right.
\end{equation}
Conversely, given an   0-1 matrix $A=(a_{ij})_{n\times n}$, we can define its digraph $D(A)=( {V}, {A})$ on vertices $v_1,v_2,\ldots,v_n$ by (\ref{eqhh1}), whose adjacency matrix is $A$. Considering the entries of $A^k(D)$, Problem \ref{p1} is equivalent to the following matrix problem, which was proposed by X. Zhan in 2007; see \cite[p.234]{MT}.

\vskip 0.3cm
{\bf Problem 1'.} {\it Determine the maximum number of ones in a 0-1 matrix $A$ of order $n$ such that $A^k$ is also a 0-1 matrix. Characterize the extremal 0-1 matrices attaining this maximum.}
	\vskip 0.3cm

In 2010, Wu \cite{W} solved Problem \ref{p1} for the case $k=2$.  Huang and Zhan \cite{HZ} solved the case $k\ge n-1\ge 4$ and determined  $ex(k+2,F_{k})$, $ex(k+3,F_{k})$ when $k\ge 4$. Huang, Lyu and Qiao \cite{HLQ} determined $ex(n,F_{k})$ for $4\le k\le n-4$, and  characterized $EX(n,F_{k})$ for $5\le k\le n-4$. They also characterize $EX(k+2,F_{k})$ and $EX(k+3,F_{k})$ when $k\ge 4$. Lyu \cite{lyu}  characterized $EX(n,F_{4})$ when $n\ge 8$. In this paper, we solve the last remained case $k=3$ when $n\ge 16$.\\

To state our results, we need the following notations and definitions. For a digraph $D=(V,A)$,  we denote by $a(D)$   the  size of $D$.
For $i,j\in V$, if $D$ contains an arc from $i$ to $j$, we say that $j$ is a {\it successor} of $i$, and $i$ is a  {\it predecessor} of $j$, denoted $ij$ or $i\rightarrow j$. A directed walk $xv_1v_2\cdots v_ky$ is called a {\it $xy$-walk}, where $x$ is its {\it initial vertex} and $y$ is its {\it terminal vertex}.
   For $S,T\subset V$, we denote by $D[S]$   the subgraph of $D$ induced by $S$,  $A(S,T)$ the set of arcs from $S$ to $T$, $a(S,T)$  the cardinality of $A(S,T)$.
Let
$$N^+(u)=\{x\in V|ux\in A\} \quad \text{and}\quad N^-(u)=\{x\in V|xu\in A\}$$
be the out-neighbour set and in-neighbour set a vertex $u$, respectively. The {\it out-degree} and {\it in-degree} of  $u$ are $d^+(u)\equiv |N^+(u)|$ and $d^-(u)\equiv |N^-(u)|$, respectively. Let $\Delta^+(D)$ denote the maximum out-degree of $D$, which are  abbreviate as $\Delta^+$ if there is no confusion. Given $X\subseteq V$, $d^+_{X}(u)$ and $d^-_{X}(u)$ are the numbers of the successors and predecessors of $u$ from $X$, respectively.

Two digraphs $D_1=(V_1,A_1)$ and $D_2=(V_2,A_2)$ are {\it  isomorphic}, written $D_1\cong D_2$, if there is a bijection $\sigma: V_1\rightarrow V_2$ such that $(u,v)\in A_1$ if and only if $(\sigma(u),\sigma(v))\in A_2$.

For a digraph $D=(V,A)$ with
$V=\{v_1,v_2,\ldots,v_n\}$,
a {\it blow-up} of $D$ is obtained by replacing every
vertex $v_i$ with a finite collection of copies of $v_i$, denoted  $V_i$, so that $xy$ is an arc
for $x\in V_i$ and $y\in V_j$ if and only if $v_iv_j\in A(D)$. A blow-up of a digraph is said to be {\it balanced} if $|V_i|$ and $|V_j|$ differ by at most one  for any pair $i\ne j$. Denote by  $B(V_1,V_2)$ the blow-up of an arc with vertex set $V_1\cup V_2$ such that $xy$ is an arc in $B(V_1,V_2)$ if and only if $x\in V_1, y\in V_2$, and $T(V_1,V_2,V_3)$   the blow-up of the transitive tournament of order 3 such that $xy$ is an arc in $T(V_1,V_2,V_3)$ if and only if $x\in V_i, y\in V_j$ with $i<j$.

Given a digraph $D$, denote by $D+e$ the digraph obtained from $D$ by adding an arc $e$, where the arc $e$ is allowed to be a loop.
 We define $H(V_1,V_2)$ to be the  digraph  obtained from $B(V_1,V_2)$ by adding an arc or a loop with both ends from $V_1$. Let
\begin{eqnarray*}
T_{3,n}=&\{&T(V_1,V_2,V_3)+e: |V_1|+|V_2|+|V_3|=n, |V_i|=\left\lfloor \frac{n}{3}\right\rfloor ~{\rm or}~ \left\lceil \frac{n}{3}\right\rceil ~{\rm for}~ i=1,2,3, \\
 &&  e \text{ is an arc with both ends from }  V_2  \},
\end{eqnarray*}
 which is the set of digraphs   obtained from a balanced blow-up of the transitive tournament of order 3 by embedding an arc in the middle partite set.
 Then each digraph in $T_{3,n}$ has one of the diagrams in Figure \ref{f1}.
\begin{figure}[H]\label{f1}
		\centering
		\includegraphics[width=2.7in]{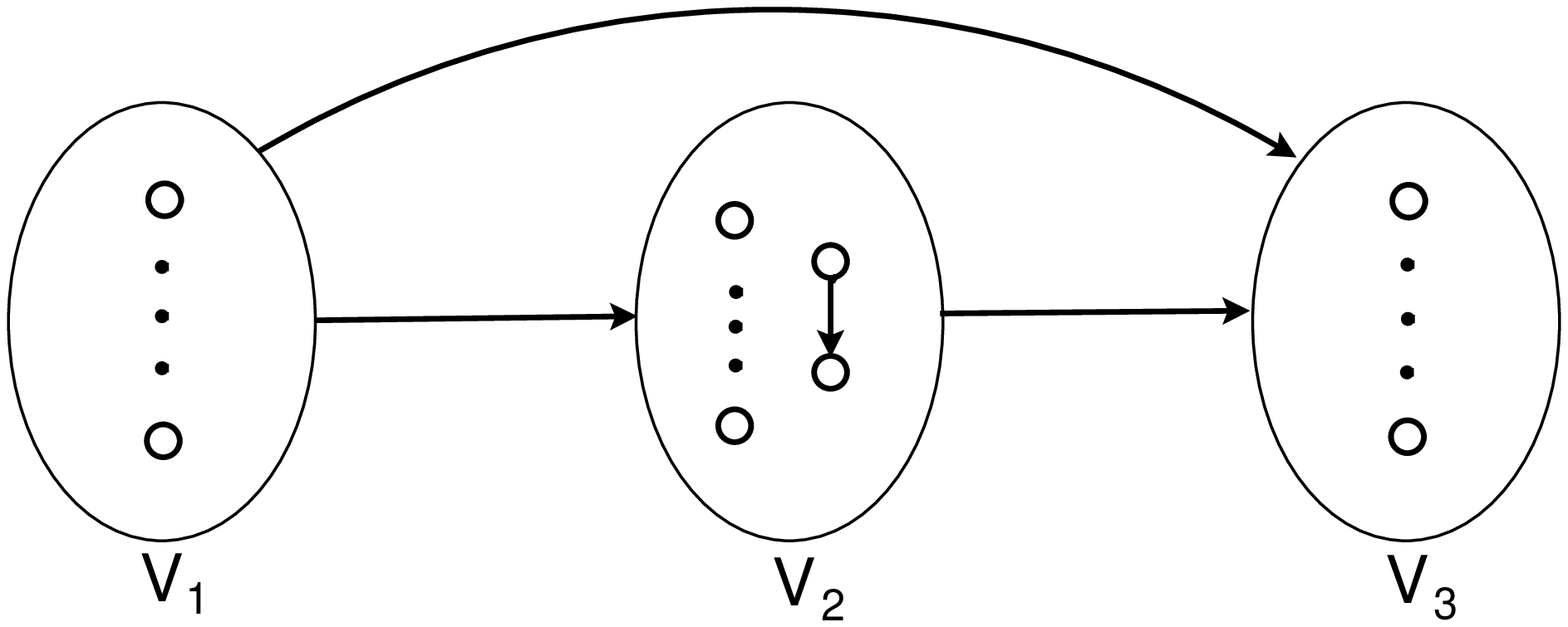}\hspace{0.8cm}
\includegraphics[width=2.7in]{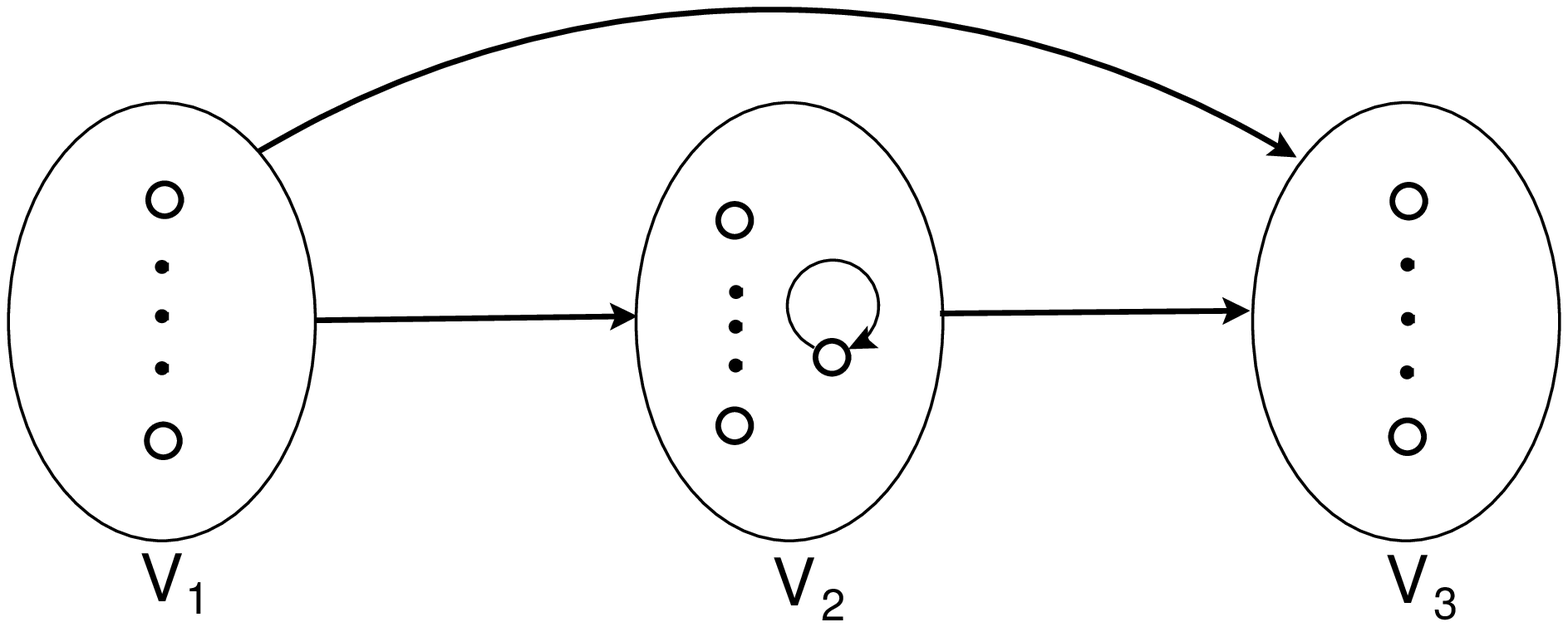}\\
\caption{The diagrams of the digraphs in $T_{3,n}$.}
\end{figure}

Now we state our main result as follows.

\begin{theorem}\label{th1}
  Let $n\ge 16$ be an integer.   Then
\begin{equation*}\label{eq15}
ex(n,F_3)= \left\lfloor \frac{n^2}{3}\right\rfloor+1 \quad and\quad EX(n,F_3)=T_{3,n}.
\end{equation*}
 \end{theorem}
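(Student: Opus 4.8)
The plan is to prove the two halves separately — the lower bound together with its extremal family, and the matching upper bound together with the uniqueness of that family — after first extracting a rigid global structure that \emph{every} $F_3$-free digraph must possess.

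\emph{Lower bound and the construction.} First I would check directly that each $D\in T_{3,n}$ is $F_3$-free of size $\lfloor n^2/3\rfloor+1$. Writing $D=T(V_1,V_2,V_3)+e$ with $e$ an arc inside $V_2$ and $n_i=|V_i|$, the size is $a(V_1,V_2)+a(V_1,V_3)+a(V_2,V_3)+1=n_1n_2+n_1n_3+n_2n_3+1$, which equals $\lfloor n^2/3\rfloor+1$ for balanced parts. In $T(V_1,V_2,V_3)$ the partite classes are totally ordered by the arcs, so the longest walk has length $2$; hence every walk of length $3$ in $D$ must use $e$, and a short check on where $e$ occurs in the walk shows that the only surviving walks are $x\to p\to q\to y$ with $x\in V_1$, $y\in V_3$ and $e=(p,q)$ (the loop case being analogous). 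Such a walk is determined by its endpoints $(x,y)$, so no two distinct walks of length $3$ share both endpoints.

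\emph{The forced forward structure.} For the upper bound let $D$ be any $F_3$-free digraph and partition its vertices by degree into $V_1=\{v:d^-(v)=0\}$, $V_3=\{v:d^+(v)=0\}$ and $V_2=\{v:d^-(v)\ge 1,\ d^+(v)\ge 1\}$, discarding isolated vertices. Since no arc can enter $V_1$ or leave $V_3$, every arc has one of the types $V_1\to V_2$, $V_1\to V_3$, $V_2\to V_2$, $V_2\to V_3$, so
\[
a(D)=a(V_1,V_2)+a(V_1,V_3)+a(V_2,V_3)+a(V_2,V_2).
\]
Tracking the partite level along any walk of length $3$ shows its two middle vertices both lie in $V_2$; that is, \emph{every walk of length $3$ has its middle arc inside $V_2$}. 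Hence an arc $(p,q)$ of $D[V_2]$ carries exactly the walks $x\to p\to q\to y$ with $x\in N^-(p)$, $y\in N^+(q)$, and $F_3$-freeness is equivalent to injectivity of the assignment $(\text{internal arc},\,x\in N^-(p),\,y\in N^+(q))\mapsto(x,y)$.

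\emph{Bounding the size and the main obstacle.} Restricting that injective assignment to $x\in V_1$, $y\in V_3$ gives
\[
\sum_{(p,q)\in A(V_2,V_2)} d^-_{V_1}(p)\,d^+_{V_3}(q)\le n_1n_3 .
\]
When the three cross blocks are complete this forces $d^-_{V_1}(p)=n_1$ and $d^+_{V_3}(q)=n_3$, so the left side is $a(V_2,V_2)\cdot n_1n_3$ and thus $a(V_2,V_2)\le 1$; with $a(V_1,V_2)+a(V_1,V_3)+a(V_2,V_3)\le n_1n_2+n_1n_3+n_2n_3\le\lfloor n^2/3\rfloor$ this yields $a(D)\le\lfloor n^2/3\rfloor+1$. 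The hard part is the general (incomplete-block) case: the displayed inequality alone is too weak, since one can manufacture internal arcs cheaply at vertices having no source-predecessor. To close the gap I would bring in the constraints from the remaining walk types $V_1\to V_2\to V_2\to V_2$, $V_2\to V_2\to V_2\to V_3$, $V_2\to V_2\to V_2\to V_2$, together with the consistency requirement that each vertex of $V_2$ has positive in- and out-degree, and then run an optimization/exchange argument showing that trading cross arcs for extra internal arcs never increases the size once $n\ge 16$. I expect this trade-off — controlling $a(V_2,V_2)$ and $|V_2|$ jointly against the three cross blocks and ruling out internal-heavy configurations — to be the main obstacle and the reason for the hypothesis $n\ge 16$.

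\emph{Equality.} Chasing the equality cases, $a(D)=\lfloor n^2/3\rfloor+1$ forces the three cross blocks to be complete and the parts balanced (equality in $n_1n_2+n_1n_3+n_2n_3\le\lfloor n^2/3\rfloor$ holds iff the $n_i$ differ pairwise by at most one), and forces $a(V_2,V_2)=1$. The unique internal arc has both ends in the middle class $V_2$ (a genuine arc or a loop), so $D\in T_{3,n}$; conversely each such digraph is extremal by the first paragraph, giving $EX(n,F_3)=T_{3,n}$.
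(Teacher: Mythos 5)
Your lower-bound half is correct and essentially identical to the paper's: every $D\in T_{3,n}$ is $F_3$-free because each walk of length $3$ must use $e$ as its middle arc, and so is determined by its endpoints. The upper bound, however, contains a genuine gap, and you have named it yourself: the entire proof that $a(D)\le\lfloor n^2/3\rfloor+1$ outside the complete-block situation is deferred to an unspecified ``optimization/exchange argument.'' That deferred step is not a technical loose end; it is the whole content of the theorem. Worse, the partition you call a forced forward structure carries no force in general. For an $F_3$-free digraph in which every vertex has positive in- and out-degree --- a directed cycle $C_n$, for instance --- your $V_1$ and $V_3$ are empty, all arcs lie in $A(V_2,V_2)$, and your injectivity inequality degenerates to the triviality that the number of walks of length $3$ is at most $n^2$. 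Bounding $a(V_2,V_2)$ for such digraphs is essentially the original extremal problem restricted to a subclass, so in the worst case your decomposition reduces nothing, and no tool in your sketch addresses it.

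For comparison, the paper extracts structure not from sources and sinks but from a vertex $v_0$ of maximum out-degree $\Delta^+$. Setting $V_1=N^+(v_0)$, the $F_3$-freeness of $D$ forces $D[V_1]$ to contain no two walks of length $2$ with a common terminal vertex (prepending $v_0$ would create two walks of length $3$ with equal endpoints). A standalone lemma (Lemma~\ref{le3}) then bounds $a(V_1,V_1)\le\lfloor(\Delta^+)^2/4\rfloor+1$ and characterizes the extremal structure $H(\cdot,\cdot)$; Claim 1 of the paper shows $d^+(u)+d^-_{V_1}(u)\le\Delta^+$ for every $u\notin V_1$, and summing gives $a(D)\le\lfloor(\Delta^+)^2/4\rfloor+1+\Delta^+(n-\Delta^+)\le\lfloor n^2/3\rfloor+1$, the last inequality being the undirected complete $3$-partite fact you also invoke. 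The equality analysis then pins down $T_{3,n}$, and the hypothesis $n\ge16$ enters through Lemma~\ref{le3} (which needs $\Delta^+\ge8$) and through the successor-counting in Claim 1 --- not through any exchange argument. To salvage your route you would need a substitute for this lemma-plus-claim mechanism that handles digraphs with few or no sources and sinks; as written, your proposal does not prove the upper bound or the uniqueness statement, since both rest on the missing step.
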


Notice that $T_{3,n}$ contains some loopless digraphs. By Theorem \ref{th1} we have the following result for strict digraphs.
 \begin{corollary}
 Let $D$ be an $F_3$-free strict digraphs on $n\ge 16$ vertices. Then
 $$a(D)\le \left\lfloor \frac{n^2}{3}\right\rfloor+1$$
with equality if and only if $D\in T_{3,n}$.
 \end{corollary}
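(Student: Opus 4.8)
The plan is to obtain the Corollary directly from Theorem~\ref{th1} by restricting to loopless digraphs, so that essentially no new argument is needed beyond two elementary observations. First I would note that a strict digraph is by definition a loopless simple digraph, hence in particular a simple digraph in the sense of this paper. Thus Theorem~\ref{th1} applies verbatim to any $F_3$-free strict digraph $D$ on $n\ge 16$ vertices and yields at once $a(D)\le ex(n,F_3)=\lfloor n^2/3\rfloor+1$, settling the size bound with no computation.

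For the equality statement the forward implication is immediate: if $a(D)=\lfloor n^2/3\rfloor+1$, then $D\in EX(n,F_3)=T_{3,n}$ by Theorem~\ref{th1}. The one point I would verify explicitly is that this characterisation remains non-vacuous once loops are forbidden, i.e.\ that $T_{3,n}$ actually contains strict digraphs. To this end, recall that every element of $T_{3,n}$ has the form $T(V_1,V_2,V_3)+e$ with $e$ an arc (possibly a loop) having both ends in $V_2$. Since $T(V_1,V_2,V_3)$ is loopless (all its arcs join distinct partite sets) and, for $n\ge 16$, $|V_2|\ge 2$, we may take $e$ to be an ordinary arc $uv$ with $u,v\in V_2$ and $u\ne v$; the resulting $T(V_1,V_2,V_3)+e$ is loopless, hence strict. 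These are precisely the strict members of $T_{3,n}$, and by Theorem~\ref{th1} each is $F_3$-free of size $\lfloor n^2/3\rfloor+1$, which supplies the converse implication and shows the extremal class is realised within strict digraphs. Combining the two directions gives, for a strict $F_3$-free digraph $D$, that $a(D)=\lfloor n^2/3\rfloor+1$ if and only if $D\in T_{3,n}$.

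I expect no obstacle specific to the Corollary: all the difficulty is absorbed into Theorem~\ref{th1}, and forbidding loops neither helps nor hurts, since the loopless members of $T_{3,n}$ already attain the bound. If one insisted on a self-contained proof, the loopless restriction would not simplify matters: the lower bound is the easy observation that $T(V_1,V_2,V_3)+e$ has length-$3$ walks only of the form $x_0uvx_3$ with $x_0\in V_1$, $x_3\in V_3$, each determined by its endpoints, so it is $F_3$-free; the genuinely hard part would remain the upper bound and the extremal characterisation carried out in Theorem~\ref{th1}.
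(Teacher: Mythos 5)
Your proposal is correct and follows exactly the paper's route: the paper derives the corollary immediately from Theorem~\ref{th1}, with the only substantive remark being that $T_{3,n}$ contains loopless (hence strict) digraphs, which is precisely the non-vacuousness check you make by choosing $e$ to be a non-loop arc inside $V_2$. No gap; your write-up is simply a more explicit version of the paper's one-line deduction.
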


{\it Remark.}  Combining Theorem \ref{th1}  with previous results, we now obtain a full solution to Problem 1 (Problem 1').

\section{Proofs}
We will need the following lemmas to prove Theorem \ref{th1}.

\begin{lemma}\label{le3}
Let $D$ be a digraph on $n\ge 8$ vertices such that there are no  two walks of length 2 with  the same terminal vertex. Then
\begin{equation}\label{eq7}
a(D)\le \left\lfloor\frac{n^2}{4}\right\rfloor+1
\end{equation}with equality if and only if $D$ is isomorphic to $H(V_1,V_2)$ with $\{|V_1|,|V_2|\}=\{\lfloor n/2\rfloor,\lceil n/2\rceil\}$.
\end{lemma}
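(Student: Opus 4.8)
The plan is to translate the hypothesis into a statement about in-degrees and then exploit the rigid structure it forces. A walk of length $2$ terminating at a vertex $j$ has the form $x\to v\to j$, where the middle vertex satisfies $v\in N^-(j)$ and the start vertex satisfies $x\in N^-(v)$; hence the number of such walks equals $\sum_{v\in N^-(j)}d^-(v)$, and the hypothesis is exactly
\begin{equation*}
\sum_{v\in N^-(j)}d^-(v)\le 1\qquad\text{for every }j\in V. \tag{$\ast$}
\end{equation*}
I would partition $V$ by in-degree into $U=\{v:d^-(v)=0\}$, $W'=\{v:d^-(v)=1\}$ and $W''=\{v:d^-(v)\ge 2\}$, of sizes $p,s,t$ with $p+s+t=n$. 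Two structural facts follow immediately from $(\ast)$. First, if some $w\in W''$ had an out-neighbour $j$, then $w\in N^-(j)$ would already contribute $d^-(w)\ge 2$ to the sum in $(\ast)$, a contradiction; thus every vertex of $W''$ is a sink. Second, no arc can end in $U$, by definition of $U$. Consequently all arcs emanate from $U\cup W'$ and terminate in $W'\cup W''$.

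Next I would convert this into an arc count. Since $a(D)=\sum_j d^-(j)=s+\sum_{j\in W''}d^-(j)$, it suffices to bound the in-degrees inside $W''$. Splitting the predecessors of $j\in W''$ and using that $W''$ consists of sinks gives $\sum_{j\in W''}d^-(j)=a(U,W'')+a(W',W'')$. The first term is at most $|U|\,|W''|=pt$. For the second, $(\ast)$ forces each vertex to have at most one predecessor of positive in-degree, hence at most one predecessor in $W'$, so $a(W',W'')\le t$. Altogether
\begin{equation*}
a(D)\le s+pt+t,\qquad p+s+t=n. \tag{$\dagger$}
\end{equation*}

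The optimization of $(\dagger)$ is the step I expect to be the main obstacle, because the unconstrained maximum of its right-hand side is $\lfloor (n+1)^2/4\rfloor$, well above the target; the point is that this value is not simultaneously attainable, and the size $s=|W'|$ must be treated with care. When $s=0$ there are no in-degree-one vertices, so $a(W',W'')=0$ and $(\dagger)$ improves to $a(D)\le pt=p(n-p)\le \lfloor n^2/4\rfloor$. When $s\ge 1$, rewrite $(\dagger)$ as $a(D)\le s+t(p+1)$ with $t+(p+1)=n-s+1$, whence $t(p+1)\le\lfloor (n-s+1)^2/4\rfloor$ and $a(D)\le \phi(s):=s+\lfloor(n-s+1)^2/4\rfloor$. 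A direct computation shows that for $n\ge 8$ the function $\phi$ is strictly decreasing on $1\le s\le n-1$ (the quadratic term drops by more than the linear term rises when $s$ increases by one), and $\phi(n)=n$; hence $\phi(s)\le\phi(1)=\lfloor n^2/4\rfloor+1$, with strict inequality for $s\ge 2$. This proves the bound $a(D)\le\lfloor n^2/4\rfloor+1$ in all cases.

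Finally I would read off the equality case. Equality forces $s=1$, since $s=0$ yields at most $\lfloor n^2/4\rfloor$ and $s\ge 2$ yields $\phi(s)<\phi(1)$. With $s=1$, equality in $t(p+1)\le\lfloor n^2/4\rfloor$ forces $\{t,p+1\}=\{\lfloor n/2\rfloor,\lceil n/2\rceil\}$, while equality in $(\dagger)$ forces $a(U,W'')=pt$ and $a(W',W'')=t$. Putting $V_2=W''$ and $V_1=U\cup W'$, the first equality means every vertex of $U$ points to every vertex of $W''$ and the second means the unique vertex of $W'$ also points to every vertex of $W''$; thus $B(V_1,V_2)\subseteq D$, accounting for exactly $(p+1)t=\lfloor n^2/4\rfloor$ arcs. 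The one remaining arc is the unique arc into the vertex of $W'$, whose tail lies in $U\cup W'=V_1$ (a loop or an ordinary arc with both ends in $V_1$), so $D=B(V_1,V_2)+e$ with $e$ having both ends in $V_1$; that is, $D\cong H(V_1,V_2)$ with $\{|V_1|,|V_2|\}=\{\lfloor n/2\rfloor,\lceil n/2\rceil\}$. The converse is a direct verification: in such an $H(V_1,V_2)$ the only possible middle vertex of a length-$2$ walk is the head of $e$, so all these walks have distinct terminal vertices, $(\ast)$ holds, and the size is $\lfloor n^2/4\rfloor+1$.
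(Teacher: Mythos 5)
Your proof is correct, but it runs along a different decomposition than the paper's. The paper splits $V$ into non-sinks $V_1$ and sinks $V_2$, observes that a non-sink must have in-degree at most $1$ (two predecessors of a non-sink plus one of its successors give two length-$2$ walks with the same terminus), subdivides $V_1$ into $V_1'$ (in-degree $0$) and $V_1''$ (in-degree $1$), and then closes the argument with the single algebraic inequality $|V_1''||V_2|+1\ge |V_1''|+|V_2|$, so no optimization over partition sizes is ever needed. You instead partition purely by in-degree into $U,W',W''$ and use the dual formulations of the same two structural facts (in-degree at least $2$ forces a sink; no vertex has two predecessors of positive in-degree); this makes the count $a(D)=\sum_j d^-(j)=s+a(U,W'')+a(W',W'')$ very transparent, but it shifts the burden onto a genuine discrete optimization of $s+pt+t$ over compositions $p+s+t=n$, which the paper's bookkeeping avoids. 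Your handling of that optimization is sound in substance but contains one overstatement: $\phi(s)=s+\lfloor (n-s+1)^2/4\rfloor$ is not strictly decreasing on all of $1\le s\le n-1$, since $\phi(s)-\phi(s+1)=\lfloor (n-s+1)/2\rfloor-1$ vanishes at the tail (indeed $\phi(n-2)=\phi(n-1)=\phi(n)=n$). This is harmless: the same difference formula shows $\phi$ is non-increasing on the whole range and $\phi(1)-\phi(2)=\lfloor n/2\rfloor-1>0$, which is exactly what you need to conclude $\phi(s)\le\phi(2)<\phi(1)=\lfloor n^2/4\rfloor+1$ for $s\ge 2$ and to force $s=1$ in the equality case. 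Your equality analysis then recovers $H(V_1,V_2)$ just as the paper does, and you additionally verify the converse (that $H(V_1,V_2)$ satisfies the hypothesis), which the paper leaves implicit.
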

\begin{proof}
Suppose $D=(V,A)$. Let $V_1=\{u\in V|d^+(u)>0\}$ and let $V_2=V\setminus V_1$. Then we have
\begin{equation}\label{eq11}
a(V_2,V)=0
\end{equation} and
\begin{equation*}
d^-(u)\le 1 \quad {\rm for~ all}\quad u\in V_1.
\end{equation*}
Let $V'_1=\{u\in V_1|d^-(u)=0\}$ and   $V''_1=\{u\in V_1|d^-(u)=1\}$. Then $V_1=V'_1\cup V''_1$ and $V'_1\cap V''_1=\emptyset$. Moreover,
\begin{equation}\label{eq12}
a(V_1,V_1)=\sum\limits_{u\in V_1}d^-(u)=|V''_1|.
\end{equation}

If $V''_1$ is empty, we have
\begin{eqnarray}\label{eqh1}
a(D)&=&a(V_1,V_2)+a(V_1,V_1)+a(V_2,V)\nonumber\\
&=&a(V_1,V_2)+|V''_1|+0\nonumber\\
&\le& |V_1||V_2|
\le \left\lfloor\frac{n^2}{4}\right\rfloor,
\end{eqnarray}
which implies (\ref{eq7}).

Now suppose $|V''_1|\ge 1$. Then we have
\begin{equation*}\label{eq14}
a(V''_1,V_2)\le |V_2|.
\end{equation*} Otherwise $a(V''_1,V_2)>|V_2|$ implies that there exist $u_1,u_2\in V''_1$ sharing a common successor in $V_2$, which contradicts the given condition. Notice that $$|V''_1||V_2|+1\ge  |V_2|+|V''_1|.$$
We have
\begin{eqnarray}
a(D)&=&a(V_1,V_2)+a(V_1,V_1)+a(V_2,V)\nonumber\\
&=&a(V_1',V_2)+a(V''_1,V_2)+|V''_1|\nonumber\\
&\le& |V'_1||V_2|+|V_2|+|V''_1|\nonumber\\
&=&|V_1||V_2|-|V''_1||V_2|+|V_2|+|V''_1|\nonumber\\
&\le& |V_1||V_2|+1\nonumber\\
&\le& \left\lfloor\frac{n^2}{4}\right\rfloor+1\label{eqh22}.
\end{eqnarray}
Hence, (\ref{eq7}) holds.

If equality in (\ref{eq7}) holds,  then $|V''_1|\ge 1$, and all inequalities in (\ref{eqh22}) are equalities. We can deduce that $||V_1|-|V_2||\le 1$, $a(V_1',V_2)=|V_1'||V_2|$, $a(V''_1,V_2)=|V_2|$ and $|V''_1|=1$. Therefore,  $D$ is isomorphic to $H(V_1,V_2)$ with $\{|V_1|,|V_2|\}=\{\lfloor n/2\rfloor,\lceil n/2\rceil\}$.
\end{proof}

\begin{corollary}\label{cor1}
Let $D$ be a digraph on $n\ge 8$ vertices without two walks of length 2 sharing the same terminal vertex such that
\begin{equation}\label{eq13}
a(D)= \left\lfloor \frac{n^2}{4} \right\rfloor.
\end{equation}  Then $D$ is isomorphic to a digraph obtained from $H(V_1,V_2)$ with $\{|V_1|,|V_2|\}=\{\lfloor n/2\rfloor,\lceil n/2\rceil\}$ by deleting an arbitrary arc, or $D=H(V_1,V_2)$ with $\{|V_1|,|V_2|\}=\left\{n/2-1,n/2+1\right\}$ provided $n$ is even.
\end{corollary}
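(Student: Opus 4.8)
The plan is to retrace the proof of Lemma~\ref{le3}, but now keeping exact account of how much each inequality in its estimate loses, since under \eqref{eq13} we sit exactly one unit below the maximum. I keep the partition $V=V_1\cup V_2$ and its refinement $V_1=V_1'\cup V_1''$ from that proof, together with the established facts $a(V_2,V)=0$, $d^-(u)\le 1$ for $u\in V_1$, and $a(V_1,V_1)=|V_1''|$.

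First I dispose of the case $V_1''=\emptyset$. Here $a(D)=a(V_1,V_2)\le |V_1||V_2|\le \lfloor n^2/4\rfloor$, so equality in \eqref{eq13} forces both $a(V_1,V_2)=|V_1||V_2|$ and $|V_1||V_2|=\lfloor n^2/4\rfloor$. The second equality forces $\{|V_1|,|V_2|\}=\{\lfloor n/2\rfloor,\lceil n/2\rceil\}$, and the first says every arc from $V_1$ to $V_2$ is present; thus $D=B(V_1,V_2)$, which is precisely $H(V_1,V_2)$ with its unique internal arc deleted, hence a member of the first listed family.

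For $V_1''\ne\emptyset$ (and $V_2\ne\emptyset$, the only subcase that can meet \eqref{eq13}, since $V_2=\emptyset$ leaves at most $n$ arcs) I introduce four nonnegative integers recording the slack of the chain in Lemma~\ref{le3}: $\delta_1=|V_1'||V_2|-a(V_1',V_2)$, $\delta_2=|V_2|-a(V_1'',V_2)$, $\delta_3=(|V_2|-1)(|V_1''|-1)$, and $\delta_4=\lfloor n^2/4\rfloor-|V_1||V_2|$. The algebraic point hidden in that proof is that $|V_2|\bigl(1-|V_1''|\bigr)+|V_1''|=1-\delta_3$, so substituting the two bounds $a(V_1',V_2)=|V_1'||V_2|-\delta_1$ and $a(V_1'',V_2)=|V_2|-\delta_2$ into $a(D)=a(V_1',V_2)+a(V_1'',V_2)+|V_1''|$ and rearranging yields
\[
a(D)=\left\lfloor\frac{n^2}{4}\right\rfloor+1-(\delta_1+\delta_2+\delta_3+\delta_4).
\]
As the four defects are nonnegative integers, \eqref{eq13} forces exactly one of them to equal $1$ and the rest to vanish.

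Finally I run through the four cases. If $\delta_1=1$ or $\delta_2=1$, then $\delta_3=\delta_4=0$; since $n\ge 8$ makes the balanced parts exceed $1$, $\delta_3=0$ forces $|V_1''|=1$, and $\delta_4=0$ forces balanced sizes, so $D$ is the full blow-up $B(V_1,V_2)$ with one internal arc added and exactly one bipartite arc removed, i.e.\ $H(V_1,V_2)$ minus an arc. If $\delta_3=1$ then $(|V_2|-1)(|V_1''|-1)=1$ forces $|V_2|=|V_1''|=2$, which is incompatible with the balanced sizes demanded by $\delta_4=0$ when $n\ge 8$, so this case is vacuous. If $\delta_4=1$ then $\delta_1=\delta_2=0$ and $|V_1||V_2|=\lfloor n^2/4\rfloor-1$; again $\delta_3=0$ gives $|V_1''|=1$, so $D=H(V_1,V_2)$ exactly, and the Diophantine equation $|V_1||V_2|=\lfloor n^2/4\rfloor-1$ under $|V_1|+|V_2|=n$ has a solution only for even $n$, namely $\{|V_1|,|V_2|\}=\{n/2-1,n/2+1\}$, matching the second listed family and explaining the ``$n$ even'' proviso. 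I expect the main obstacle to be this last step: faithfully reconstructing each extremal digraph up to isomorphism from the defect conditions, and carrying out the parity analysis of $|V_1||V_2|$ subject to $|V_1|+|V_2|=n$ that simultaneously produces the two families and rules out the remaining possibilities.
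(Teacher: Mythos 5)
Your proof is correct and follows essentially the same route as the paper: you reuse the partition $V_1=V_1'\cup V_1''$, $V_2$ and the inequality chain from Lemma~\ref{le3}, and track where the single unit of slack below $\lfloor n^2/4\rfloor+1$ can occur, which is exactly the paper's equality analysis of its displays (\ref{eqh1}) and (\ref{eqh22}), merely reorganized into explicit defect variables $\delta_1,\dots,\delta_4$. Your case $\delta_1+\delta_2=1$ matches the paper's balanced case with one bipartite arc deleted, your vacuous case $\delta_3=1$ and your Diophantine case $\delta_4=1$ match the paper's $||V_1|-|V_2||=2$ alternative for even $n$, and your explicit handling of $V_2=\emptyset$ is a small point of extra care rather than a different idea.
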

\begin{proof}
Define $V_1,V_2,V_1',V_1''$ as in Lemma \ref{le3}. If $V_1''$ is empty, then we have (\ref{eqh1}). By (\ref{eq11}) and (\ref{eq12}), equality in (\ref{eqh1}) holds if and only if $D$ is isomorphic to $B(V_1,V_2)$ with $\{|V_1|,|V_2|\}=\{\lfloor n/2\rfloor,\lceil n/2\rceil\}$.

If $|V_1''|\ge 1$, then we have (\ref{eqh22}). The condition (\ref{eq13}) leads to $|V_1''|= 1$ and
$$||V_1|-|V_2||=\left\{\begin{array}{ll}
1,& {\rm if}~  n ~{\rm  is~ odd};\\
0 {\rm~or~} 2, & {\rm if~} n ~{\rm is ~even}.
   \end{array}\right.$$
 Moreover, (\ref{eq11}) and (\ref{eq12}) implies that there is exactly one arc $e$ in $D[V_1]$ and all the other arcs in $D$ have  tails in $V_1$ and heads in $V_2$. If $n$ is odd, then $||V_1|-|V_2||=1$  implies that $D$ is isomorphic to a digraph obtained from $B(V_1,V_2)+e\in H(V_1,V_2)$ by deleting an arc in $B(V_1,V_2)$. If $n$ is even, then  $|V_1|=|V_2|=n/2$  implies that $D$ is isomorphic to the digraphs obtained from $B(V_1,V_2)+e$ by deleting an arc in $B(V_1,V_2)$, and $||V_1|-|V_2||=2$ implies that $D$ is isomorphic to $H(V_1,V_2)$ with $\{|V_1|,|V_2|\}=\{n/2-1,n/2+1\}$.

\end{proof}

\par

Now we are ready to present the proof of Theorem \ref{th1}.

{\bf Proof of Theorem \ref{th1}.}
Let $D\in T_{3,n}$. Then $D=T(V_1,V_2,V_3)+e$ with $|V_i|=\lfloor  {n}/{3}\rfloor$   or  $\lceil {n}/{3}\rceil ~{\rm for}~ i=1,2,3,$  and
$ e$  is an arc with both ends from    $V_2$. It is clear that all walks in $D$ have length $\le 3$. Moreover, for any walk $v_1v_2v_3v_4$ of length 3, we have $v_2v_3=e$. Therefore, there are no two distinct walks of length 3 with the same initial vertex and the same terminal vertex, i.e., $D$ is $F_3$-free. Hence,
\begin{equation}\label{eq4}
ex(n,F_3)\ge a(D)=\left\lfloor \frac{n^2}{3}\right\rfloor+1.
\end{equation}

Now let $D=(V,A)\in EX(n,F_3)$. Suppose $v_0$ is  a vertex with $d^+(v_0)=\Delta^+$. It follows from (\ref{eq4}) that
\begin{equation}\label{eq5}
\Delta^+ \ge \left\lfloor \frac{n}{3}\right\rfloor+1.
\end{equation}
Let $V_1= N^+(v_0)$ and  $V_2=V\setminus V_1$. We count $a(D)$ in the following way:
\begin{equation}\label{eq1}
a(D)=a(V_1,V_1)+a(V_1,V_2)+a(V_2,V)=a(V_1,V_1)+\sum\limits_{u\in V_2}[d^+(u)+d^-_{V_1}(u)].
\end{equation}
For any $u\in V_2$, if $d^-_{V_1}(u)\ge 2$, then $d^+(u)=0$  as $D$ is $F_3$-free. Therefore,
\begin{equation}\label{eq2}
d^+(u)+d^-_{V_1}(u)\le \Delta^++1
\end{equation}
 for all $u\in V_2$.
Moreover, equality in (\ref{eq2}) holds only if $d^+(u)=\Delta^+$ and $d^-_{V_1}(u)=1$. Now we prove the following claim.\\

\par
{\bf Claim 1.} $d^+(u)+d^-_{V_1}(u)\le\Delta^+$ for all $u\in V_2$.

{\bf Proof of Claim 1.} We first assert that there are at most two vertices in $V_2$ satisfying the equality in (\ref{eq2}). Suppose otherwise there are three vertices $u_1,u_2,u_3\in V_2$ satisfying the equality in (\ref{eq2}). Then we have  $$d^-_{V_1}(u_1)=d^-_{V_1}(u_2)=d^-_{V_1}(u_3)=1$$ and $$d^+(u_1)=d^+(u_2)=d^+(u_3)=\Delta^+.$$
Hence, there are $u_1',u_2',u_3'\in V_1$ such that $\{u_1'u_1,u_2'u_2,u_3'u_3\}\subseteq A$. By (\ref{eq5}), there are two vertices in $\{u_1,u_2,u_3\}$ sharing a common successor. Without loss of generality, we assume $u_1\rightarrow u$ and $u_2\rightarrow u$. Then $D$ contains the walks $v_0\rightarrow u_1'\rightarrow u_1\rightarrow u$ and $v_0\rightarrow u_2'\rightarrow u_2\rightarrow u$, a contradiction.

Since $D$ is $F_3$-free and $V_1$ is the out-neighbour set of $v_0$, $D[V_1]$  does not contain  two distinct walks of length 2 with a common terminal vertex. Applying Lemma \ref{le3} on $D[V_1]$, we have
$$a(D[V_1])=a(V_1,V_1)\le \left\lfloor\frac{(\Delta^+)^2}{4}\right\rfloor+1.$$
Combining this with (\ref{eq4}) and (\ref{eq1}), we obtain  $$\left\lfloor\frac{(\Delta^+)^2}{4}\right\rfloor+\Delta^+(n-\Delta^+)+3\ge a(D)\ge \left\lfloor\frac{n^2}{3}\right\rfloor+1,$$
which leads to
$$\frac{(\Delta^+)^2}{4}+\Delta^+(n-\Delta^+)+3\ge \frac{n^2}{3}.$$
It follows that
\begin{equation}\label{eq6}
\left\lceil\frac{2n}{3}\right\rceil-2\le \Delta^+\le\left\lfloor\frac{2n}{3}\right\rfloor+2.
\end{equation}

Suppose that there are $u_1,u_2\in V_2$ such that $$d^+(u_1)+d^-_{V_1}(u_1)=d^+(u_2)+d^-_{V_1}(u_2)=\Delta^++1.$$ Then $d^+(u_1)=d^+(u_2)=\Delta^+$ and $d^-_{V_1}(u_1)=d^-_{V_1}(u_2)=1$, which implies that there are $u_1',u_2'\in V_1$ such that $\{u_1'u_1,u_2'u_2\}\subseteq A$. By (\ref{eq6}), $u_1$ and $u_2$ share a common successor  $u$. Therefore,  $D$ contains the walks $v_0\rightarrow u_1'\rightarrow u_1\rightarrow u$ and $v_0\rightarrow u_2'\rightarrow u_2\rightarrow u$, a contradiction.

Note that $\Delta^+(n-\Delta^+)+\left\lfloor(\Delta^+)^2/4\right\rfloor$ is the size of a complete 3-partite (undirected) graph on $n$ vertices and $\left\lfloor n^2/3\right\rfloor$ is the size of a balanced 3-partite complete (undirected) graph on $n$ vertices. We always have
\begin{equation}\label{equ1}
\left\lfloor n^2/3\right\rfloor\ge \Delta^+(n-\Delta^+)+\left\lfloor(\Delta^+)^2/4\right\rfloor.
\end{equation}

Suppose there is only one vertex $u\in V_2$ satisfying the equality in (\ref{eq2}). Then we get
\begin{equation}\label{eq3}
d^+(u)=\Delta^+.
\end{equation} Moreover, $u$ has exactly one predecessor $w$ in $V_1$.
By (\ref{eq4}), (\ref{eq1}), (\ref{eq2}) and (\ref{equ1}), we get
\begin{eqnarray*}
a(V_1,V_1)&=&a(D)-\sum\limits_{u\in V_2}[d^+(u)+d^-_{V_1}(u)]\\
&\ge& \left\lfloor\frac{n^2}{3}\right\rfloor-\Delta^+(n-\Delta^+)\\
&\ge& \left\lfloor\frac{(\Delta^+)^2}{4}\right\rfloor.
\end{eqnarray*}
Therefore, we have either
\begin{equation}\label{eqh2}
a(V_1,V_1)=\left\lfloor\frac{(\Delta^+)^2}{4}\right\rfloor+1
\end{equation}
or
\begin{equation}\label{eqh3}
a(V_1,V_1)=\left\lfloor\frac{(\Delta^+)^2}{4}\right\rfloor.
\end{equation}
If (\ref{eqh2}) holds, then  $D[V_1]$ isomorphic to $H(U_1,U_2)$ with $\{|U_1|,|U_2|\}=\{\lfloor \Delta^+/2\rfloor,\lceil \Delta^+/2\rceil\}$. If (\ref{eqh3}) holds,  applying Corollary \ref{cor1} on $D[V_1]$, $D[V_1]$ is isomorphic to a digraph obtained from $H(U_1,U_2)$ with $\{|U_1|,|U_2|\}=\{\lfloor \Delta^+/2\rfloor,\lceil \Delta^+/2\rceil\}$ by deleting an arbitrary arc, or $D=H(U_1,U_2)$ with $\{|U_1|,|U_2|\}=\left\{\Delta^+/2-1,\Delta^+/2+1\right\}$ provided $\Delta^+$ is even.
In all the above cases, there is a subset of  $V_1$ with cardinality $\lceil \Delta^+/2\rceil-1$, say $V_3$, in which any pair of vertices has a common successor in $V_1$.

Since $D$ is $F_3$-free, $u$ has at most one successor in $V_3$.  Now $n\ge 16$, $d^+(u)=\Delta^+$ and (\ref{eq6}) enforce that $u$ has at least three successors $u_1,u_2,u_3$ in $V_2$.
On the other hand, we assert that there is at most one vertex $v\in V_2$ such that $$d^+(v)+d^-_{V_1}(v)\le \Delta^+-1.$$ Otherwise
we have
\begin{eqnarray*}
a(D)&=&a(V_1,V_1)+\sum\limits_{v\in V_2}[d^+(v)+d^-_{V_1}(v)]\\
&\le&\left(\left\lfloor \frac{(\Delta^+)^2}{4}\right\rfloor+1\right)+(n-\Delta^+)\Delta^+-1\\
&<&\left\lfloor \frac{n^2}{3}\right\rfloor+1,
\end{eqnarray*}
a contradiction.  Hence, two of  $u_1,u_2,u_3$, say $u_1$ and $u_2$, satisfy $d^+(u_i)+d^-_{V_1}(u_i)\ge \Delta^+-1$. It follows that  $d^+(u_i)\ge \Delta^+-2$ for $i=1,2$. Since  $u_1$ and $u_2$ have at most one successor in $V_3$,  they have a common successor $x$. Therefore, $D$ contains the walks $w\rightarrow u\rightarrow u_1\rightarrow x$ and $w\rightarrow u\rightarrow u_2\rightarrow x$, a contradiction.

 Therefore, there is no vertex $u$ in $V_2$ such that the equality in (\ref{equ1}) holds.
This completes the proof of Claim 1.
\qed\\

 \par

By Claim 1, (\ref{eq1}) and (\ref{equ1}), we get
\begin{eqnarray}
a(D)&=& a(V_1,V_1)+ \sum\limits_{u\in V_2}[d^+(u)+d^-_{V_1}(u)]\nonumber\\
&\le&\left\lfloor \frac{(\Delta^+)^2}{4}\right\rfloor+1+(n-\Delta^+)\Delta^+\label{equ2}\\
&\le&\left\lfloor \frac{n^2}{3}\right\rfloor+1\label{equ3}.
\end{eqnarray}
Combining this with (\ref{eq4}), we have
\begin{equation}\label{eqh13}
ex(n,F_3)=a(D)=\left\lfloor \frac{n^2}{3} \right\rfloor+1.
\end{equation}

Now we characterize the structure of $D$. The equality (\ref{eqh13}) implies that (\ref{equ2}) and  (\ref{equ3}) are both equalities. The equality in (\ref{equ3}) leads to
\begin{equation}\label{eq18}
\Delta^+=\left\{\begin{array}{ll}
 2n/3,&\text{if } n\equiv 0 \text{ (mod 3)};\\
\lfloor2n/3\rfloor \text{ or }\lfloor2n/3\rfloor+1,&\text{otherwise}.\\
\end{array} \right.
\end{equation}
The equality in (\ref{equ2}) implies
\begin{equation*}\label{eq9}
a(V_1,V_1)=\left\lfloor \frac{(\Delta^+)^2}{4}\right\rfloor+1
\end{equation*}
and
\begin{equation*}\label{eq10}
d^+(u)+d^-_{V_1}(u)=\Delta^+ \quad \text{for all}\quad u\in V_2.
\end{equation*}
Applying Lemma \ref{le3} on $D[V_1]$, $D[V_1]$ is isomorphic to $H(V_3,V_4)$ with $\{|V_3|,|V_4|\}=\{\lfloor\Delta^+/2\rfloor,$ $\lceil\Delta^+/2\rceil\}$.

Next we prove
\begin{equation}\label{eqh5}
a(V_1,V_2)=0.
 \end{equation}
 If there is a vertex $u\in V_2$ such that $d^-_{V_1}(u)\ge 2$, then $u$ has no successor, which implies  $d^-_{V_1}(u)=\Delta^+$. Therefore, all vertices in $V_4$ are predecessors of $u$. Choose any $u_1,u_2\in V_3$ and $u_3\in V_4$. We find two walks  $v_0\rightarrow u_1\rightarrow u_3\rightarrow u$ and $v_0\rightarrow u_2\rightarrow u_3\rightarrow u$ in $D$, a contradiction. Therefore, we have
\begin{equation}\label{eqh4}
d^-_{V_1}(u)\le 1 \quad\text{and}\quad d^+(u)\ge\Delta^+-1\quad\text{for all}\quad u\in V_2.
\end{equation}
Suppose  $d^-_{V_1}(u)=1$ with $w$ being the predecessor of $u$ in $V_1$. Then $d^+(u)=\Delta^+-1$. We assert that $u$ has at most one successor in $V_3$. Otherwise if $u$ has two successors $u_1,u_2$ in $V_3$, then $D$ contains two walks $w\rightarrow u\rightarrow u_1\rightarrow u_3$ and $w\rightarrow u\rightarrow u_2\rightarrow u_3$  for any $u_3\in V_4$, a contradiction.  By (\ref{eqh4}),   $u$ has at least two successors in $V_2$, say $u_1$ and $u_2$, which have a common successor $u_3$.
  Then $D$ contains two walks $w\rightarrow u\rightarrow u_1\rightarrow u_3$ and $w\rightarrow u\rightarrow u_2\rightarrow u_3$, a contradiction. Therefore, we have
  \begin{equation} \label{eq17}
d^-_{V_1}(u)=0 \quad\text{and}\quad d^+(u)=\Delta^+ \quad\text{for all}\quad u\in V_2,
\end{equation}
and (\ref{eqh5}) holds.

Finally, we assert
\begin{equation}\label{eqh6}
a(V_2,V_2)=0.
 \end{equation}
 Otherwise suppose   $u_1u_2\in A(D[V_2])$. If $u_2$ has two successors in $V_2$, say $u_3$ and $u_4$, by (\ref{eq18}) and (\ref{eq17}), $u_3$ and $u_4$ share a common successor $u_5$. Then $D$ contains two walks $u_1\rightarrow u_2\rightarrow u_3\rightarrow u_5$ and $u_1\rightarrow u_2\rightarrow u_4\rightarrow u_5$, a contradiction. If $u_2$ has two successors $u_3,u_4\in V_3$, then those successors share a common successor $u_5$ in $V_4$, and $D$ also contains the above walks, a contradiction again. It follows that
$$d^+(u_2)\le |V_2|+2<\Delta^+,$$ which contradicts (\ref{eq17}).
Hence, we have (\ref{eqh6}).

By (\ref{eqh5}), (\ref{eq17}) and (\ref{eqh6}), we see that $D[V_2]$ is an empty digraph and all vertices in $V_1$ are successors of $u$ for all $u\in V_2$. Therefore, $D=T(V_2,V_3,V_4)+e\in T_{3,n}$, where both ends of the arc $e$ are from $V_3$.
 This completes the proof.
\qed

\section*{Acknowledgement}
This work was supported by Science and Technology Foundation of Shenzhen City
(No. JCYJ20190808174211224)

\end{document}